\theoremstyle{definition}
\newtheorem{df}{Definition}
\newtheorem{prop}{Proposition}
\newtheorem{thm}{Theorem}
\title{Normal-sized hypercuboids in a given hypercube}
\author{Takashi HIROTSU}
\date{\today}
\begin{document}
\maketitle
\begin{abstract}
In a given hypercube, draw grid lines parallel to the edges, and consider all hypercuboids (or hypercubes) whose edges are lying on the grid lines or the boundary. 
We find the limit of the value of the ratio of the arithmetic mean of the volumes of those hypercuboids (or hypercubes) to the entire volume as the grid spacing becomes smaller. 
\end{abstract}
The author posted the following problem on his own website \cite{3} for high school students: ``On a square, draw grid lines to divide each edge into $m$ segments, and consider all squares surrounded by grid lines or the boundary. 
Find the limit of the value of the ratio of the arithmetic mean of the areas of those squares to the entire area as $m \to \infty.$'' 
The answer is $1/10.$ 
To clarify why the number $10$ appears, we generalize this problem to higher dimensions.
\begin{df}\label{Df1}
Let $n$ and $m$ be positive integers. 
In an $n$-dimensional hypercube $H \subset \mathbb R^n$ of edge length $a,$ draw grid lines to divide each edge into $m$ segments, and consider all hypercuboids whose edges are lying on the grid lines or the boundary. 
Denote by $q_n(m)$ the value of the ratio of the arithmetic mean of the volumes of those hypercuboids to the entire volume, and put $q_n = \lim_{m \to \infty}q_n(m).$ 
We say that a hypercuboid is {\itshape normal-sized} if its volume is $q_na^n.$\par
Similarly, denote by $r_n(m)$ the value of the ratio of the arithmetic mean of the volumes of such hypercubes to the entire volume, and put $r_n = \lim_{m \to \infty}r_n(m).$ 
We say that a hypercube in $H$ is {\itshape normal-sized} if its volume is $r_na^n.$
\end{df}
\begin{thm}\label{Thm1}
In an $n$-dimensional hypercube $H \subset \mathbb R^n,$ the value $q_n$ of the ratio of the volume of a normal-sized hypercuboid to the entire volume is 
\[ q_n = \frac{1}{3^n}.\]
\end{thm}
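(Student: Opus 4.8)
The plan is to exploit the product structure of a hypercuboid: in each of the $n$ coordinate directions, an admissible hypercuboid is determined by choosing an unordered pair of distinct grid lines, and these $n$ choices are entirely independent. Labelling the grid lines in one direction by $0, 1, \ldots, m$ (so that the physical coordinate of line $j$ is $ja/m$), there are exactly $\binom{m+1}{2}$ choices per direction, hence $\binom{m+1}{2}^n$ hypercuboids in all. The volume of the hypercuboid determined by the pairs $(j_i, k_i)$ with $j_i < k_i$ in direction $i$ is $\prod_{i=1}^n (k_i - j_i)\tfrac{a}{m}$.

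First I would write the arithmetic mean of the volumes as a sum over all such tuples divided by the total count, and observe that because both the volume and the summation factor across the $n$ directions, the mean itself factors as the $n$-th power of a single one-dimensional mean. Concretely,
\[ q_n(m) = \left[\frac{1}{\binom{m+1}{2}}\sum_{0 \le j < k \le m}\frac{k-j}{m}\right]^n, \]
so the whole problem reduces to evaluating the average gap $\tfrac{k-j}{m}$ between two grid lines chosen at random.

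Next I would pull out the factor $1/m$ and compute the bare sum $\sum_{0 \le j < k \le m}(k-j)$ by grouping pairs according to their gap $d = k - j$: for each $d \in \{1, \ldots, m\}$ there are precisely $m+1-d$ pairs, giving
\[ \sum_{0 \le j < k \le m}(k-j) = \sum_{d=1}^m d\,(m+1-d) = \frac{m(m+1)(m+2)}{6}. \]
Dividing by $\binom{m+1}{2} = \tfrac{m(m+1)}{2}$ and by $m$ yields a one-dimensional mean of $\tfrac{m+2}{3m}$, so that $q_n(m) = \left(\tfrac{m+2}{3m}\right)^n$. Letting $m \to \infty$ gives $q_n = 1/3^n$, which exposes the origin of the exponent: each direction independently contributes a factor $1/3$, the limiting expected fraction of the edge spanned by a random pair of grid lines.

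The computation is essentially routine, so there is no deep obstacle; the only step requiring genuine care is the justification of the factorization, namely that the uniform average over all hypercuboids decomposes as a product of independent one-dimensional averages. I would also record the consistency check that $q_2 = 1/9$ is the rectangle analogue of the motivating square problem (whose answer $1/10$ belongs instead to the $r_n$ series).
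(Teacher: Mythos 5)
Your proposal is correct and is essentially the paper's own argument: the paper likewise factors $q_n(m)$ into the $n$-th power of a one-dimensional average and evaluates the same sum $\sum_{d=1}^m d(m+1-d) = m(m+1)(m+2)/6$ (there indexed by edge length $j$ with position count $m+1-j$ rather than by pairs of grid lines regrouped by gap), obtaining $q_n(m) = \left(\frac{m+2}{3m}\right)^n \to 3^{-n}$. Your parametrization by unordered pairs of grid lines and the consistency remark about $q_2 = 1/9$ versus $r_2 = 1/10$ are cosmetic additions, not a different method.
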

\begin{proof}
It suffices to show for the case $H = [0,1]^n.$ 
Since the number of small hypercuboids of edge lengths $j_1/m,$ $\cdots,$ $j_n/m$ in $[0,1]^n$ is $(m+1-j_1)\cdots (m+1-j_n)$ for $1 \leq j_1 \leq m,$ $\cdots,$ $1 \leq j_n \leq m,$ the value of the ratio of the arithmetic mean of the volumes of all small hypercuboids to the entire volume is 
\begin{align*} 
q_n(m) &= \frac{\sum_{j_1 = 1}^m\cdots\sum_{j_n = 1}^m(m+1-j_1)\cdots (m+1-j_n)(j_1/m)\cdots (j_n/m)}{\sum_{j_1 = 1}^m\cdots\sum_{j_n = 1}^m(m+1-j_1)\cdots (m+1-j_n)} \\ 
&= \frac{\left(\sum_{i = 1}^m(m+1-i)i\right) ^n}{m^n\left(\sum_{i = 1}^mi\right) ^n} = \frac{(m(m+1)(m+2)/6)^n}{m^n(m(m+1)/2)^n} = \frac{(m+2)^n}{3^nm^n}. 
\end{align*} 
By taking the limit, we obtain 
\[ q_n = \lim\limits_{m \to \infty}q_n(m) = \lim\limits_{m \to \infty}\frac{1}{3^n}\left( 1+\dfrac{2}{m}\right) ^n = \frac{1}{3^n}.\]
\end{proof}
\begin{thm}\label{Thm2}
In an $n$-dimensional hypercube $H \subset \mathbb R^n,$ the value $r_n$ of the ratio of the volume of a normal-sized hypercube to the entire volume is 
\[ r_n = \frac{1}{\binom{2n+1}{n}}.\]
\end{thm}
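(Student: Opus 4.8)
The plan is to follow the proof of Theorem~\ref{Thm1}, again reducing to $H=[0,1]^n$. A grid-aligned hypercube is determined by a single common edge length $j/m$ with $1\le j\le m$, and for each such $j$ there are $(m+1-j)^n$ placements (one factor $m+1-j$ of translational freedom per coordinate), each of volume $(j/m)^n$. Hence
\[ r_n(m)=\frac{\sum_{j=1}^m(m+1-j)^n(j/m)^n}{\sum_{j=1}^m(m+1-j)^n}. \]
The crucial difference from Theorem~\ref{Thm1} is that here all $n$ edge lengths are forced to coincide, so the numerator does \emph{not} split into an $n$-th power of a one-dimensional sum; the exact closed forms used before are unavailable, and I would instead extract the limit by an asymptotic (Riemann-sum) analysis.

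First I would reindex the denominator by $i=m+1-j$ to get $\sum_{i=1}^m i^n$, and factor the obvious powers of $m$ out of both sums. Writing $A_m=\frac1m\sum_{j=1}^m\bigl((m+1-j)/m\bigr)^n(j/m)^n$ and $B_m=\frac1m\sum_{i=1}^m(i/m)^n$, a short computation shows that the numerator equals $m^{n+1}A_m$ and the denominator equals $m^{n+1}B_m$, so the powers cancel completely and $r_n(m)=A_m/B_m$. Both $A_m$ and $B_m$ are Riemann sums for continuous functions on $[0,1]$, so I would pass to the limit: $B_m\to\int_0^1 x^n\,dx=1/(n+1)$ and $A_m\to\int_0^1 x^n(1-x)^n\,dx=B(n+1,n+1)=(n!)^2/(2n+1)!$.

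The one point requiring care --- and the main obstacle --- is that the summand of $A_m$ involves $(m+1-j)/m=(1-j/m)+1/m$ rather than $1-j/m$, so it is not literally a Riemann sum for $x^n(1-x)^n$. I would handle this by expanding $\bigl((1-j/m)+1/m\bigr)^n$ and noting that its discrepancy from $(1-j/m)^n$ is $O(1/m)$ uniformly in $j$ (the arguments lie in $[0,1]$ and the binomial coefficients are bounded), so it contributes a vanishing correction to $A_m$. Since $B_m\to 1/(n+1)\ne 0$, the limit of the ratio is the ratio of the limits, giving
\[ r_n=\frac{(n!)^2/(2n+1)!}{1/(n+1)}=\frac{(n+1)(n!)^2}{(2n+1)!}=\frac{n!\,(n+1)!}{(2n+1)!}=\frac{1}{\binom{2n+1}{n}}. \]
As a sanity check, $n=2$ yields $1/\binom{5}{2}=1/10$, recovering the original square problem.
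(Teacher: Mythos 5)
Your proof is correct, but it takes a genuinely different route from the paper's. The paper stays discrete: it expands $(m+1-j)^n$ by the binomial theorem, passes to the limit term by term using the leading-term asymptotics of the power sums $\sum_j j^{n+i},$ arrives at the alternating sum $(n+1)\sum_{i=0}^n\frac{(-1)^i}{n+1+i}\binom{n}{i},$ and then needs a separate combinatorial identity (Proposition \ref{Prop1}) to evaluate it --- an identity whose own proof converts the sum back into the integral $\int_0^1t^n(1-t)^n\,dt.$ You go straight to that integral: after factoring $m^{n+1}$ out of numerator and denominator you recognize both as Riemann sums, and you correctly flag and dispose of the only delicate point, namely that $(m+1-j)/m$ differs from $1-j/m$ by $1/m,$ so the numerator is a perturbed Riemann sum whose error is $O(1/m)$ uniformly in $j.$ In effect both proofs evaluate the same Beta integral $\mathrm B(n+1,n+1) = (n!)^2/(2n+1)!$; yours reaches it in one step, which is shorter and makes the appearance of $\binom{2n+1}{n}$ transparent, at the cost of the uniform-error estimate. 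The paper's longer chain keeps every manipulation exact until a single limit (no uniform convergence argument is needed) and yields Proposition \ref{Prop1} as a standalone byproduct. Your sanity check $n=2$ giving $1/10$ matches the motivating problem, and your final simplification $\frac{(n+1)(n!)^2}{(2n+1)!}=\frac{1}{\binom{2n+1}{n}}$ is correct.
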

\begin{proof}
It suffices to show for the case $H = [0,1]^n.$ 
Since the number of small hypercubes of edge length $j/m$ in $[0,1]^n$ is $(m+1-j)^n$ for $1 \leq j \leq m,$ the value of the ratio of the arithmetic mean of the volumes of all small hypercubes to the entire volume is 
\[ r_n(m) = \frac{\sum_{j = 1}^m(m+1-j)^n(j/m)^n}{\sum_{j = 1}^m(m+1-j)^n} = \frac{\sum_{i = 0}^n(-1)^i\binom{n}{i}(m+1)^{n-i}\sum_{j = 1}^mj^{n+i}}{m^n\sum_{i = 1}^mi^n}\] 
by the binomial theorem. 
Since the leading term of $(m+1)^{n-i}\sum_{j = 1}^mj^{n+i},$ $m^n\sum_{i = 1}^mi^n$ in $m$ are $m^{2n+1}/(n+1+i),$ $m^{2n+1}/(n+1)$ respectively, we have 
\[ r_n = \lim\limits_{m \to \infty}r_n(m) = (n+1)\sum_{i = 0}^n\frac{(-1)^i}{n+1+i}\binom{n}{i} = \frac{n+1}{(n+1)\binom{2n+1}{n}} = \frac{1}{\binom{2n+1}{n}}\]
by the following proposition.
\end{proof}
\begin{prop}\label{Prop1}
We have 
\[\sum_{i = 0}^n\frac{(-1)^i}{n+1+i}\binom{n}{i} = \frac{1}{(n+1)\binom{2n+1}{n}}.\]
\end{prop}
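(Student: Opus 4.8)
The plan is to collapse the alternating sum into a single integral, exploiting the representation $\frac{1}{n+1+i} = \int_0^1 x^{n+i}\,dx$, which recombines the binomial coefficients into a power of $(1-x)$. This is the natural move here because the summation index $i$ appears both in the denominator $n+1+i$ and in the binomial coefficient, and the integral representation is precisely what disentangles the two.

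First I would substitute the integral for each reciprocal and interchange the (finite, hence harmless) sum with the integral, obtaining
\[\sum_{i = 0}^n\frac{(-1)^i}{n+1+i}\binom{n}{i} = \int_0^1 x^n\sum_{i=0}^n\binom{n}{i}(-x)^i\,dx = \int_0^1 x^n(1-x)^n\,dx,\]
where the last equality is just the binomial theorem applied to $(1-x)^n$. This reduces the identity to evaluating a single Beta integral.

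Next I would evaluate $\int_0^1 x^n(1-x)^n\,dx = B(n+1,n+1) = \dfrac{n!\,n!}{(2n+1)!}$. If one prefers a self-contained derivation rather than quoting $B(p,q) = \frac{(p-1)!\,(q-1)!}{(p+q-1)!}$, this value follows by induction on $n$ via a single integration by parts, which lowers the exponent on $x$ while raising it on $(1-x)$. Finally I would reconcile the two sides using $\binom{2n+1}{n} = \frac{(2n+1)!}{n!\,(n+1)!}$, whence $(n+1)\binom{2n+1}{n} = \frac{(2n+1)!}{(n!)^2}$ and therefore $\frac{1}{(n+1)\binom{2n+1}{n}} = \frac{(n!)^2}{(2n+1)!}$, exactly matching the integral.

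I do not expect a serious obstacle: the only real content is recognizing the Beta-integral structure, after which everything is routine. The most delicate point is purely clerical — keeping the factorial simplification straight (or, if the Beta function is avoided, verifying the induction step for the integral). Should the integral route be deemed undesirable, an alternative is a direct induction on $n$ for the sum itself, but I would expect the recurrence bookkeeping there to be messier than the one-line integral evaluation above.
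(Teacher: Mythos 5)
Your proposal is correct and is essentially the paper's own proof: both arguments use the binomial theorem to collapse the sum into the Beta integral $\int_0^1 x^n(1-x)^n\,dx = \frac{(n!)^2}{(2n+1)!}$ and then match this with $\frac{1}{(n+1)\binom{2n+1}{n}}$. The only cosmetic difference is that the paper integrates the identity $\sum_{i=0}^n\binom{n}{i}x^{n+i} = x^n(1+x)^n$ over $[-1,0]$ and substitutes $t=-x$, whereas you insert $\frac{1}{n+1+i} = \int_0^1 x^{n+i}\,dx$ term by term and integrate over $[0,1]$ directly, which spares you the $(-1)^n$ sign bookkeeping on both sides.
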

\begin{proof}
By the binomial theorem, we have 
\[\sum_{i = 0}^n\binom{n}{i}x^{n+i} = x^n(1+x)^n.\] 
By integrating the both sides from $-1$ to $0,$ we obtain 
\[\sum_{i = 0}^n\binom{n}{i}\int_{-1}^0x^{n+i}dx = \int_{-1}^0x^n(1+x)^ndx.\] 
On the left hand side, we have 
\[\sum_{i = 0}^n\binom{n}{i}\int_{-1}^0x^{n+i}dx = \sum_{i = 0}^n\binom{n}{i}\left[\frac{x^{n+1+i}}{n+1+i}\right]_{-1}^0 = (-1)^n\sum_{i = 0}^n\frac{(-1)^i}{n+1+i}\dbinom{n}{i}.\] 
On the right hand side, by putting $t = -x$ and applying the relation between the Beta and Gamma functions $\mathrm B(x,y) = \Gamma (x)\Gamma (y)/\Gamma (x+y)$ to $x = y = n+1,$ we obtain 
\begin{align*} 
\int_{-1}^0x^n(1+x)^ndx &= \int_1^0(-t)^n(1-t)^n(-1)dt = (-1)^n\int_0^1t^n(1-t)^ndt \\ 
&= (-1)^n\frac{n!n!}{(2n+1)!} = (-1)^n\frac{n!(n+1)!}{(n+1)\cdot (2n+1)!} \\ 
&= (-1)^n\frac{1}{(n+1)\binom{2n+1}{n}}. 
\end{align*} 
These imply the desired identity.
\end{proof}
\begin{thm}\label{Thm3}
In an $n$-dimensional hypercube $H \subset \mathbb R^n$, the value $\sqrt[n]{r_n}$ of the ratio of the edge length of a normal-sized hypercube to that of $H$ converges to $1/4$ as $n \to \infty.$
\end{thm}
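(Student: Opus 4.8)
The plan is to reduce the claim to a statement about the exponential growth rate of a single binomial coefficient. By Theorem~\ref{Thm2} we have $r_n = 1/\binom{2n+1}{n}$, so $\sqrt[n]{r_n} = \binom{2n+1}{n}^{-1/n}$, and it suffices to prove that
\[ \lim_{n \to \infty}\binom{2n+1}{n}^{1/n} = 4. \]
Everything then follows by continuity of $x \mapsto 1/x$ away from $0$.

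First I would trap $\binom{2n+1}{n}$ between two quantities whose $n$-th roots are easy to compute, so as to avoid any delicate asymptotics. The identity $\sum_{k=0}^{2n+1}\binom{2n+1}{k} = 2^{2n+1} = 2 \cdot 4^n$, together with the observation that $\binom{2n+1}{n}$ is the largest among the $2n+2$ summands (and equals $\binom{2n+1}{n+1}$), yields the elementary two-sided bound
\[ \frac{4^n}{n+1} \leq \binom{2n+1}{n} \leq 2 \cdot 4^n. \]
The lower bound comes from comparing the maximal term to the average of all $2n+2$ terms, and the upper bound is immediate since a single term cannot exceed the whole sum.

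Next I would take $n$-th roots throughout and apply the squeeze theorem. This gives
\[ \frac{4}{(n+1)^{1/n}} \leq \binom{2n+1}{n}^{1/n} \leq 4 \cdot 2^{1/n}, \]
and since $(n+1)^{1/n} \to 1$ and $2^{1/n} \to 1$ as $n \to \infty$ (both verified by taking logarithms), the two bounding sequences converge to $4$. Hence $\binom{2n+1}{n}^{1/n} \to 4$, and therefore $\sqrt[n]{r_n} = \binom{2n+1}{n}^{-1/n} \to 1/4$.

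I do not anticipate a genuine obstacle: the argument is entirely elementary once the correct bracketing inequality is in hand, and the only care required is in justifying the standard limits $(n+1)^{1/n} \to 1$ and $2^{1/n} \to 1$. As an alternative one could invoke Stirling's formula, which gives $\binom{2n+1}{n} = \frac{2n+1}{n+1}\binom{2n}{n} \sim 2 \cdot 4^n/\sqrt{\pi n}$ and makes the limit transparent; but the squeeze argument above has the advantage of using nothing beyond the binomial theorem, matching the elementary flavor of the preceding proofs.
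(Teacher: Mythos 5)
Your proof is correct, and it rests on the same core idea as the paper's: sandwiching a central binomial coefficient between $4^n$ divided by a linear factor and a constant multiple of $4^n$ via the binomial theorem, then squeezing $n$-th roots. The structural difference is which row of Pascal's triangle you use. You work directly in the row of $2n+1$: from $\sum_{k=0}^{2n+1}\binom{2n+1}{k} = 2\cdot 4^n$ and the fact that $\binom{2n+1}{n}$ is the (tied) maximal entry of that row, you get $4^n/(n+1) \leq \binom{2n+1}{n} \leq 2\cdot 4^n$ in one stroke. The paper (Proposition \ref{Prop2}) instead bounds $\binom{2n}{n}$ using the row of $2n$, transfers to the odd case through the identity $\binom{2n+1}{n} = \frac{2n+1}{n+1}\binom{2n}{n}$, and handles the resulting linear factors with $\lim_{x \to \infty}(ax+b)^{1/x} = 1$, proved by l'H\^{o}pital's rule. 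Your route is slightly leaner: it needs neither the transfer identity nor l'H\^{o}pital (the limits $(n+1)^{1/n} \to 1$ and $2^{1/n} \to 1$ follow from $\log(n+1)/n \to 0$), at the modest cost of justifying that the central entry of an odd row is maximal. What the paper's formulation buys in exchange is a marginally more general lemma, giving the growth rate of both $\binom{2n}{n}$ and $\binom{2n+1}{n}$ at once, whereas your argument establishes exactly what Theorem \ref{Thm3} requires and nothing more.
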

\begin{proof}
By Theorem \ref{Thm2}, we have 
\[\frac{1}{\sqrt[n]{r_n}} = \sqrt[n]{\binom{2n+1}{n}}.\] 
By the following proposition, we obtain the desired result.
\end{proof}
\begin{prop}\label{Prop2}
We have 
\[\lim\limits_{n \to \infty}\sqrt[n]{\binom{2n}{n}} = \lim\limits_{n \to \infty}\sqrt[n]{\binom{2n+1}{n}} = 4.\] 
\end{prop}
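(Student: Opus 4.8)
The plan is to avoid Stirling's formula and instead obtain the limit by an elementary squeeze, using only that $\binom{2n}{n}$ is the largest entry in the $(2n)$th row of Pascal's triangle together with the fact that $\sqrt[n]{p(n)} \to 1$ for any fixed polynomial $p$.

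First I would treat the central binomial coefficient $\binom{2n}{n}$. Summing the $(2n)$th row gives $\sum_{k=0}^{2n}\binom{2n}{k} = 4^n$, and since this row has $2n+1$ entries of which $\binom{2n}{n}$ is the maximum, I obtain the two-sided bound
\[ \frac{4^n}{2n+1} \le \binom{2n}{n} \le 4^n. \]
Taking $n$th roots yields $4/\sqrt[n]{2n+1} \le \sqrt[n]{\binom{2n}{n}} \le 4$. Since $\sqrt[n]{2n+1} = \exp\bigl(\tfrac{1}{n}\ln(2n+1)\bigr) \to 1$, the squeeze theorem gives $\sqrt[n]{\binom{2n}{n}} \to 4$.

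Next I would reduce the second limit to the first rather than redo the estimate from scratch. Pascal's rule gives $\binom{2n+1}{n} = \binom{2n}{n} + \binom{2n}{n-1}$, and because $\binom{2n}{n-1} \le \binom{2n}{n}$ by unimodality of the row, this sandwiches
\[ \binom{2n}{n} \le \binom{2n+1}{n} \le 2\binom{2n}{n}. \]
Taking $n$th roots, $\sqrt[n]{\binom{2n}{n}} \le \sqrt[n]{\binom{2n+1}{n}} \le \sqrt[n]{2}\,\sqrt[n]{\binom{2n}{n}}$; since $\sqrt[n]{2}\to 1$ and the two outer quantities both tend to $4$, a final squeeze gives $\sqrt[n]{\binom{2n+1}{n}} \to 4$ as well.

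I do not anticipate a serious obstacle here; the only points needing care are the standard facts that the central binomial coefficient is the maximum of its row (which is what makes the elementary bounds hold) and that $\sqrt[n]{p(n)}\to 1$, which is precisely where the factors $\sqrt[n]{2n+1}$ and $\sqrt[n]{2}$ wash out in the limit. An alternative route would invoke the asymptotic $\binom{2n}{n}\sim 4^n/\sqrt{\pi n}$ from Stirling's formula, from which both claims follow immediately, but the squeeze argument is cleaner and entirely self-contained.
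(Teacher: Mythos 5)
Your proof is correct and follows essentially the same route as the paper: the identical two-sided bound $\frac{4^n}{2n+1} \le \binom{2n}{n} \le 4^n$ obtained from the row sum $\sum_{k=0}^{2n}\binom{2n}{k} = 4^n$, followed by taking $n$th roots and squeezing, using that $\sqrt[n]{2n+1} \to 1$. The only minor difference is in the second limit, where the paper uses the exact identity $\binom{2n+1}{n} = \frac{2n+1}{n+1}\binom{2n}{n}$ while you instead sandwich $\binom{2n}{n} \le \binom{2n+1}{n} \le 2\binom{2n}{n}$ via Pascal's rule and unimodality; both dispose of the extra factor in the limit equally well.
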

\begin{proof}
By the binomial theorem, we have $(1+1)^{2n} = \sum_{i = 0}^{2n}\binom{2n}{i},$ and therefore 
\[\frac{4^n}{2n+1} \leq \binom{2n}{n} \leq 4^n.\] 
We also have 
\[\binom{2n+1}{n} = \frac{2n+1}{n+1}\binom{2n}{n}.\] 
By taking the $n$-th roots and the limits, we obtain the desired formulae, since 
\[\lim\limits_{x \to \infty}(ax+b)^{\frac{1}{x}} = 1 \quad (a,\ b > 0),\] 
which follows from 
\[\lim\limits_{x \to \infty}\log(ax+b)^{\frac{1}{x}} = \lim\limits_{x \to \infty}\frac{\log (ax+b)}{x} = \lim\limits_{x \to \infty}\frac{a/(ax+b)}{1} = 0\] 
by l'H\^{o}pital's rule.
\end{proof}

\end{document}